\g@addto@macro\normalsize{%
  \setlength\abovedisplayskip{10pt}
  \setlength\belowdisplayskip{10pt}
  \setlength\abovedisplayshortskip{5pt}
  \setlength\belowdisplayshortskip{8pt}
}
\newtheoremstyle{normal}
{5pt}
{5pt}
{\normalfont}
{}
{\bfseries}
{}
{0.4em}
{\bfseries{\thmnumber{#2}.\,\thmname{#1}.\thmnote{ \hspace{0.5em}(#3)\newline}}}
\theoremstyle{normal}
\newtheorem{thm}{Theorem}
\newtheorem{ex}[thm]{Example}
\newtheorem{ta}[thm]{Exercise}
\newtheorem{app}[thm]{Application}
\newtheorem{dfn}[thm]{Definition}
\renewcommand{\epsilon}{\varepsilon}
\definecolor{grey}{gray}{.3}
\definecolor{green-1}{HTML}{0080FF} 
\definecolor{grey-1}{HTML}{BDBDBD}
\definecolor{grey-2}{HTML}{A4A4A4}
\definecolor{grey-3}{HTML}{848484}%
\definecolor{grey-4}{HTML}{848484}%
\definecolor{ocker-1}{HTML}{0080FF}
\definecolor{background}{HTML}{E6E6E6}
\definecolor{red-1}{HTML}{0080FF}
\begin{document}
$ $
\vspace{-60pt}

\title{An Open Day in the Metric Space}

\author{Sven-Ake Wegner\hspace{0.1pt}\MakeLowercase{$^{\text{1}}$} and Katrin Rolka\hspace{0.5pt}\MakeLowercase{$^{\text{2}}$}}

\renewcommand{\thefootnote}{}
\hspace{-1000pt}\footnote{\hspace{5.5pt}2010 \emph{Mathematics Subject Classification}: Primary 97C90; Secondary 97D40, 97D80.\vspace{1.6pt}}

\hspace{-1000pt}\footnote{\hspace{5.5pt}\emph{Key words and phrases}: workshop for high school students, metric space, open days in mathematics, student life cycle.\vspace{1.6pt}}

\hspace{-1000pt}\footnote{\hspace{0pt}$^{1}$\,Nazarbayev University, Department of Mathematics, School of Science and Technology, 53 Kabanbay Batyr Ave,\linebreak\phantom{x}\hspace{1.2pt}010000 Astana, Kazakhstan, Phone: +7\hspace{1.2pt}(8)\hspace{1.2pt}7172\hspace{1.2pt}/\hspace{1.2pt}69\hspace{1.2pt}-\hspace{1.2pt}4671, e-mail: svenake.wegner@nu.edu.kz.\vspace{1.6pt}}

\hspace{-1000pt}\footnote{\hspace{0pt}$^{2}$\,Ruhr-Universit\"at Bochum, Fakult\"at f\"ur Mathematik, Universit\"atsstr.\,150, 44780 Bochum, Germany, Phone:\hspace{1.2pt}\hspace{1.2pt}+49\hspace{1.2pt}(0)\linebreak\phantom{x}\hspace{1pt}234\hspace{1.2pt}/\hspace{1.2pt}32\hspace{1.2pt}-\hspace{1.2pt}25616, e-mail: katrin.rolka@rub.de.\vspace{1.6pt}}

\begin{abstract} We report on a workshop for grade eleven high school students, which took place in the framework of a university open day. During the workshop the participants first discovered the key properties of the intuitive concept of distance from real life examples. After this preparation, the formal definition of a metric space was introduced and discussed in small groups by means of problem-oriented exercise sessions.
\end{abstract}

\maketitle

\vspace{-15pt}

\section{Introduction}\label{SEC-1}

In the last decade, the number of events organized by the universities for students at high school or even elementary school has increased significantly. This can be seen, e.g., by browsing corresponding university websites. Publications, however, are mostly informal and often concentrate on compiling the material used rather than on explaining and reflecting the methodological and didactic concepts involved. One of the few exceptions is a paper by Halverscheid and Sibbertsen \cite{HS} in which the authors conduct a careful analysis of the students' feedback. The aim of many events is to recruit future university students in the respective discipline. In view of the high number of university drop-outs---in particular in mathematics---it is worthwhile to design recruiting events in such a way which allows the participants to gain a realistic insight into their potential life as a student rather than merely to maximize the number of future students. In order to achieve this goal we suggest using a format which allows applicants to imitate the main activities that university students have to pursue during their mathematics studies, in particular those of the first year at university: attending lectures, solving exercises on their own or in small groups and presenting solutions to classmates. In addition, we suggest that the level of the lectures and exercises should ever so often reach university level. In order to meet the latter goal, one first has to select an appropriate topic. On the one hand, the topic needs to be accessible for high school students and on the other hand it should allow the teacher to illustrate what university studies are about. Second, a workshop which covers the above three activities (attending lectures, solving exercises, presenting solutions) has to be conceptualized.

\smallskip

In this article we report on a workshop for grade eleven high school students organized in the summer term 2013 at the University of Wuppertal (Germany) and whose topic was the mathematical notion of a metric space. In Section \ref{SEC-2} we outline the material covered and we describe our work flow during the workshop. In Section \ref{SEC-3} we present and discuss the feedback received from the participants.

\vspace{2pt}

\section{Workshop}\label{SEC-2}

Our workshop entitled \textquotedblleft{}Metric Spaces---on the notion of distance in mathematics\textquotedblright{} took place in the summer term 2013 at the University of Wuppertal (Germany). It was part of the Wuppertal Summer University, an open day for female high school students interested in mathematics, natural sciences or engineering. One specific aim was to complement other formats like lectures, presentations or panel discussions by an event that allows the participants to take an active role and enables them to experience what it feels like to study mathematics on a university level. The workshop took place in a seminar room and lasted for two hours. Eleven high school students, between 16 and 18 years old, participated. Due to the format of the open day, the participants came from different high schools with wildly varying backgrounds in mathematics. German high school education usually treats concepts like mappings, limits or continuity not in a rigorous but rather in an intuitive way. In order to allow the students to work on the exercises together, they were split into four groups. Since we had recruited four undergraduate students for the workshop we were able to assign one teaching assistant to each group to provide support during the exercise sessions.

\smallskip

In order to make our workshop on the basis of this article easily reproducible we give in the sequel an outline of the lecture, the exercises and their solutions. We note that the following is a selection of standard material as presented in courses on analysis and is covered together with more background information in many textbooks. We refer, e.g., to Rudin \cite[Chapter 2, p.30ff]{Rudin}, Browder \cite[Chapter 6.2]{Browder},  Amann and Escher \cite[Chapter II.1]{AE1} and Dugundji \cite[Chapter IX]{D}. For further reading about the Manhattan metric, see Exercise \ref{Exercise-1} and Theorem \ref{THM}, we refer to the book by Krause \cite{Krause}.

\medskip

We started the workshop with the following exercise for the students.

\medskip

\begin{ta}\label{Exercise-1} Find a meaningful notion of distance in the following four situations.\vspace{3pt}
\begin{compactitem}
\item[(1)] Points in the plane.\vspace{3pt}
\item[(2)] Places in the city.\vspace{3pt}
\item[(3)] Members of a social network.\vspace{3pt}
\item[(4)] Continuous functions $f$, $g\colon[0,1]\rightarrow\mathbb{R}$, e.g., $f(x)=\sqrt{x}$, $g(x)=x^3$.
\end{compactitem}
\end{ta}

\medskip

After an individual working period of about fifteen minutes we presented a possible solution and discussed the results found by the students during the prior working period.

\medskip

\begin{compactitem}

\item[(1)] Points $P_0$ and $P_1$ in the plane can be described by coordinates $P_0=(x_0,y_0)$, $P_1=(x_1,y_1)$. The \textquotedblleft{}usual distance\textquotedblright{}, i.e., the length of a straight line from $P_0$ to $P_1$, can then be expressed via the Pythagorean theorem in terms of the coordinates, that is
$$
d(P_0,P_1)=\sqrt{(y_1-y_0)^2+(x_1-x_0)^2},
$$
compare Figure 1.\vspace{-10pt}
\begin{figure}[h]
\hspace{17pt}\begin{tikzpicture}[scale=1.3]
    \draw [<->,thick] (0,1.9) node (yaxis) [above] {}
        |- (3.1,0) node (xaxis) [right] {};
    \coordinate[label=above:$P_0$] (A) at (0.5,0.55);
\coordinate[label=above:$P_1$] (B) at (2.5,1.6);
    \coordinate (C) at (2.5,0.55);

??\draw (A) -- (B);
\fill (A) circle (1pt);
\fill (B) circle (1pt);

\draw[black,line width=0.7pt] (A) -- (C) -- (B);

\draw[dashed] (yaxis |- A) node[left] {$y_0$} 
    -| (xaxis -| A) node[below] {$x_0$};
    
    \draw[dashed] (yaxis |- B) node[left] {$y_1$};
    \draw[dashed] (xaxis -| C) node[below] {$x_1$};
    \draw[dashed] (2.5,0) -- (C);
    \draw[dashed] (0,1.6) -- (B);

\draw [decorate,decoration={brace,amplitude=4pt, mirror,raise=6.2pt},yshift=-40pt,xshift=30pt]
(A) -- (C) node [black,midway,yshift=-0.5cm] {\footnotesize
$x_1-x_0$};

\draw [decorate,decoration={brace,amplitude=4pt,mirror,raise=6.2pt},yshift=0pt]
(C) -- (B) node [black,midway,xshift=0.8cm] {~~~\footnotesize$y_1-y_0$};

\end{tikzpicture}
\begin{center}
{\small{\sc Figure 1.} Distance of two points in the plane.}
\end{center}
\end{figure}

\item[(2)] As an example consider the city map of Manhattan and define as the distance of two places the kilometres you have to drive when you take the shortest route, cf.~Figure 2.\vspace{1pt}

\begin{figure}[h]
\hspace{-23pt}\begin{tikzpicture}
\fill [background,opacity=0.4] (0.1,0) rectangle (4,2.5);

\fill [black, opacity=0]  (-0.5,-0.25) rectangle (4,2.5);

\fill [grey-1] (0.1,2.5) rectangle (0.5,1.8);
\fill [grey-2] (0.7,2.5) rectangle (1.2,1.8);
\fill [grey-3, opacity=0.82] (1.4,2.5) rectangle (1.9,1.8);
\fill [grey-1] (2.1,2.5) rectangle (2.7,1.8);
\fill [grey-1] (2.9,2.5) rectangle (3.3,1.8);
\fill [grey-2] (3.5,2.5) rectangle (4.0,1.8);

\fill [grey-1] (0.1,1.6) rectangle (0.5,0.6);
\fill [grey-1] (0.7,1.6) rectangle (1.2,0.6);
\fill [grey-1] (1.4,1.6) rectangle (1.9,0.6);
\fill [grey-3, opacity=0.82] (2.1,1.6) rectangle (2.7,0.6);
\fill [grey-2] (2.9,1.6) rectangle (3.3,0.6);
\fill [grey-1] (3.5,1.6) rectangle (4.0,0.6);

\fill [grey-2] (0.1,0) rectangle (0.5,0.4);
\fill [grey-2] (0.7,0.4) rectangle (1.2,0);
\fill [grey-1] (1.4,0.4) rectangle (1.9,0);
\fill [grey-1] (2.1,0.4) rectangle (2.7,0);
\fill [grey-1] (2.9,0.4) rectangle (3.3,0);
\fill [grey-3, opacity=0.82] (3.5,0.4) rectangle (4.0,0);

\draw[green-1, fill=green-1] (0.6,1.7) to [out=115,in=-90] (0.47,2) to [out=90,in=180] (0.6,2.1);

\draw[green-1, fill=green-1] (0.6,1.7) to [out=65,in=-90] (0.73,2) to [out=90,in=360] (0.6,2.1);

\fill[white] (0.6,1.98) circle [radius=0.05];

\draw[ocker-1,line width=1.2pt, opacity=1]  (0.6,1.65) --  (0.6,0.5)  -- (3.34,0.5);

\draw[red-1, fill=red-1] (3.4,0.5) to [out=115,in=-90] (3.27,0.8) to [out=90,in=180] (3.4,0.9);

\draw[red-1, fill=red-1] (3.4,0.5) to [out=65,in=-90] (3.53,0.8) to [out=90,in=360] (3.4,0.9);

\fill[white] (3.4,0.78) circle [radius=0.05];
\end{tikzpicture}
\begin{center}
{\small{\sc Figure 2.} Distance of two points in a city map.}
\end{center}
\end{figure}

\item[(3)] A distance of two members can be defined as the minimal number of \textquotedblleft{}hops\textquotedblright{} which is needed to pass from member $A$ to member $B$ when it is only allowed to jump between members which are friends. In the network pictured in Figure 3 we have for instance $d(A,F)=4$.\vspace{1pt}

\begin{figure}[h]
\hspace{-37pt}\begin{tikzpicture}[scale=0.45]
  \SetVertexNormal[Shape      = circle,
                   FillColor  = background,
                   LineWidth  = 1pt,
                   LineColor = grey-4]
  \SetUpEdge[lw         = 1pt,
             color      = grey-4,
             labelcolor = background,
             labeltext  = red,
             labelstyle = {draw,text=blue}]
 \tikzstyle{EdgeStyle}=[]

\Vertex[x=0.5, y=1]{A}
\Vertex[x=3, y=-1]{B}
\Vertex[x=6, y=0.2]{C}
\Vertex[x=8, y=3]{D}
\Vertex[x=4.5, y=3]{E}
\Vertex[x=10, y=-0.1]{F}
\Vertex[x=1.8, y=3.4]{G}

\Edges(A,G,E,C) \Edges(B,E) 
 \SetUpEdge[lw         = 2pt,
             color      = ocker-1,
             labelcolor = background,
             labeltext  = red,
             labelstyle = {draw,text=blue}]
\Edges(A,B,C,D,F) 

\fill [black, opacity=0]  (-3,-2) rectangle (1,1);

\end{tikzpicture}
\begin{center}
{\small{\sc Figure 3.} Distance in a network.}
\end{center}
\end{figure}

\item[(4)] The distance of $f$ and $g$ can, e.g., be defined as the maximum of the modulus of their difference over their domain, i.e.,
$$
d(f,g)=\max_{0\leqslant{}x\leqslant{}1}|f(x)-g(x)|.
$$
In the example given, $f(x)=\sqrt{x}$, $g(x)=x^{3}$, $d(f,g)=5/(6\sqrt[5]{6})$ can be explicitly computed by methods usually taught at school, cf.~Figure 4 for a visualization of the distance.\vspace{3pt}

\begin{figure}[h]
\hspace{-27pt}\begin{tikzpicture}[xscale=5,yscale=2.5]
    \draw [<->,thick] (0,1) node (yaxis) [above] {}
        |- (1.1,0) node (xaxis) [right] {};

  \draw[scale=1,domain=0:1,smooth,variable=\x,black,line width=0.8pt] plot ({\x},{\x*\x*\x});
  \draw[scale=1,domain=0:1,smooth,variable=\y,black,line width=0.8pt]  plot ({\y*\y},{\y});

\coordinate (A) at (0.4883593419305869,0.116471186461929874205496183795236640584465386094976066048669);
\coordinate (X) at (0,0.116471186461929874205496183795236640584465386094976066048669);
\coordinate (B) at (0.4883593419305869,0.698827118771579245232977102771419843506792316569856396292015);
\coordinate (D) at (0,0.698827118771579245232977102771419843506792316569856396292015);

\coordinate (C) at (1,1);
\coordinate (S) at (0,1);

\draw[black, line width=1.2pt] (A)--(B);

\draw[dashed] (X)-- (A);
\draw[dashed] (D)-- (B);

\draw [decorate,decoration={brace,amplitude=4pt,mirror,raise=6.2pt},yshift=0pt] (D) -- (X) node [black,midway,xshift=-1.1cm,yshift=-0cm] {~~~\footnotesize$d(f,g)$};

\node[text width=1.5cm] at (0.7,0.99)  {\footnotesize$f(x)=\sqrt{x}$};
\node[text width=1.5cm] at (1.06,0.6)  {\footnotesize$g(x)=x^3$};

\draw (xaxis -| C) node[below] {$1$};
\draw (xaxis -| S) node[below] {$0$};

\end{tikzpicture}
\begin{center}
{\small{\sc Figure 4.} Distance of two functions.}
\end{center}
\end{figure}
\end{compactitem}

After presenting the above we turned to the next task for which we prepared an exercise sheet.

\begin{ta}\label{Exercise-2}Review the examples from Exercise \ref{Exercise-1} by working on the following tasks. \vspace{3pt}
\begin{compactitem}
\item[(a)] Decide whether the distance function $d=d(x,y)$ of the examples (1)--(4) satisfies the following properties, respectively.\vspace{3pt}
\begin{compactitem}
\item[1.]Distances are always real numbers, i.e., $d(x,y)\in\mathbb{R}$.\vspace{3pt}
\item[2.]Distances are always non-negative, i.e., $d(x,y)\geqslant0$.\vspace{3pt}
\item[3.]The distance of a point to itself is zero, i.e., $d(x,x)=0$.\vspace{3pt}
\item[4.]If the distance of two points is zero, then the points are equal, i.e., $d(x,y)=0\Rightarrow x=y$.\vspace{3pt}
\item[5.]The distance is independent of the order of the points, i.e., $d(x,y)=d(y,x)$.\vspace{3pt}
\item[6.] A detour can only increase the distance, more precisely, $d(x,y)\leqslant d(x,z)+d(z,y)$.\vspace{5pt}
\end{compactitem}
\item[(b)] Find a city map that does not satisfy the symmetry condition 5 in the situation of Exercise \ref{Exercise-1}(2).\vspace{3pt}
\item[(c)] Consider the network

\begin{center}
\hspace{-50pt}\begin{tikzpicture}[scale=0.45]
  \SetVertexNormal[Shape      = circle,
					LineColor = grey-4,
                   FillColor  = background,
                   LineWidth  = 1pt]
  \SetUpEdge[lw         = 1pt,
             color      = grey-4,
             labelcolor = white,
             labeltext  = red,
             labelstyle = {draw,text=blue}]
 \tikzstyle{EdgeStyle}=[]
 \Vertex[x=0, y=0]{$x_1$}
\Vertex[x=3, y=2]{$x_2$}
 \Vertex[x=6, y=0]{$x_3$}
\Edges($x_1$,$x_2$) 
\end{tikzpicture}
\end{center}

\medskip

\noindent{}and define $d(x_i,x_j)$ as in Exercise \ref{Exercise-1}(3) if $1\leqslant i,j\leqslant2$. Define the remaining distances such that condition 6 in (a) is not satisfied but all other conditions are valid.
\end{compactitem}
\end{ta}

\medskip

After a short period of work three of the students were asked to present their solutions. In case of (a) this was done orally, for (b) and (c) the participants used a flip chart to draw the corresponding picture (a city map with a one-way road) and to write down the definition (e.g., $d(x_1,x_3)=d(x_3,x_1)=3$ and $d(x_2,x_3)=d(x_3,x_2)=1$), respectively.

\medskip

Part (a) above was a preparation for the formal definition of the metric as we added mathematical expressions to the properties' formulation in prose.

\smallskip

\begin{dfn}\label{DFN} Let $M$ be a set. A function $d\colon M\times M\rightarrow\mathbb{R}$, $(x,y)\mapsto d(x,y)$ is said to be a \textit{metric} if the following conditions hold for all $x$, $y$ and $z$ in $M$.\vspace{5pt}
\\\begin{tabular}[h]{llll}\vspace{4pt}
(M1)& $d(x,y)\geqslant0$, and $d(x,y)=0\,\Leftrightarrow\,x=y$, & &(positive definiteness)\\\vspace{4pt}
(M2)& $d(x,y)=d(y,x)$, & &(symmetry)\\
(M3)& $d(x,y)\leqslant d(x,z) + d(z,y)$. & &(triangle inequality)\\
\end{tabular}
\end{dfn}

\medskip

In order to illustrate how to check the conditions in a concrete situation we considered first the absolute value metric and proved in detail that it is indeed a metric.

\medskip

\begin{thm}\label{THM} The function $d\colon\mathbb{R}\times\mathbb{R}\rightarrow\mathbb{R}$, $d(x,y)=|x-y|$ defines a metric on $\mathbb{R}$.
\end{thm}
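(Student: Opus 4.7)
The plan is to verify the three axioms (M1), (M2), (M3) from Definition \ref{DFN} for the function $d(x,y)=|x-y|$, each time reducing the assertion about $d$ to a well-known property of the absolute value on $\mathbb{R}$. Since the target audience has just met the formal definition, I would make each reduction very explicit, treating the proof as a template for how to check the axioms in a concrete example rather than as a self-contained technical argument.

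For (M1), I would first invoke the definition $|t|=t$ if $t\geqslant 0$ and $|t|=-t$ otherwise to conclude that $|x-y|\geqslant 0$ always holds, so $d(x,y)\geqslant 0$. For the equivalence $d(x,y)=0\Leftrightarrow x=y$, I would argue both directions: if $x=y$ then $x-y=0$ and hence $|x-y|=0$; conversely, if $|x-y|=0$ then by the case definition $x-y$ cannot be strictly positive or strictly negative, so $x-y=0$ and thus $x=y$. For (M2), the key observation is that $|{-t}|=|t|$ for every real $t$, again by the two-case definition, so that $d(x,y)=|x-y|=|-(y-x)|=|y-x|=d(y,x)$.

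The only step requiring real work is (M3), the triangle inequality, which reduces to the classical inequality $|a+b|\leqslant|a|+|b|$ applied to $a=x-z$ and $b=z-y$; noting that $a+b=x-y$ then gives $d(x,y)=|x-y|=|a+b|\leqslant|a|+|b|=d(x,z)+d(z,y)$. So the real question is how to establish $|a+b|\leqslant|a|+|b|$ itself at the level of the audience. I would present this via a short case analysis on the signs of $a$, $b$, and $a+b$: when $a$ and $b$ have the same sign (or one is zero), one has $|a+b|=|a|+|b|$ with equality; when they have opposite signs, $|a+b|$ equals the difference of the two absolute values, which is strictly smaller than their sum. Alternatively, I could use the standard squaring trick $(|a|+|b|)^{2}-(a+b)^{2}=2(|ab|-ab)\geqslant 0$, but for grade eleven students the case analysis is more transparent, so that is the route I would pick.

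I expect the main didactic obstacle, rather than a mathematical one, to be making clear to the students what the logical structure of the proof is: namely that there is nothing to do except to tick off the three boxes (M1), (M2), (M3), and that each tick is a short computation involving only the definition of $|\cdot|$. Emphasising this template-like structure prepares the ground for subsequent exercises in which the same procedure has to be applied to other candidate metrics, such as the Manhattan metric of Exercise \ref{Exercise-1}(2).
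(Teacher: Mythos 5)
Your proposal is correct and follows essentially the same route as the paper: verify (M1)--(M3) directly, with the triangle inequality reduced to $|a+b|\leqslant|a|+|b|$ via $a=x-z$, $b=z-y$. The only divergence is in justifying that auxiliary inequality, where you opt for a case analysis on signs while the paper uses exactly the squaring argument you mention as an alternative; both are sound, and this choice is a matter of presentation rather than substance.
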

\begin{proof} We have to check that the conditions (M1)--(M3) hold. Let $x$, $y$ and $z$ be real numbers.

\medskip

The estimate $d(x,y)\geqslant0$ is valid by definition. Moreover, $d(x,x)=|x-x|=0$ holds and $d(x,y)=0$ implies $|x-y|=0$ and thus $x=y$. This proves (M1).

\medskip

For (M2) it is enough to compute
$$
d(x,y)=|x-y|=|y-x|=d(y,x).
$$

Finally, we have
\begin{eqnarray*}
d(x,y) &=&|x-y|=|x-z+z-y|\\
       &\leqslant&|x-z|+|z-y|= d(x,z)+d(z,y).
\end{eqnarray*}
Here, we used $|a+b|\leqslant|a|+|b|$ with $a=x-z$ and $b=z-y$. The latter is equivalent to $(a+b)^2\leqslant(|a|+|b|)^2$ which in turn is equivalent to $a^2+2ab+b^2\leqslant a^2+2|a||b|+b^2$ which is obvious after cancellation. This completes the proof of (M3).
\end{proof}

The next step was the task for the participants to check the properties (M1)--(M3) in another example by themselves. Note that the following so-called Manhattan metric corresponds to the picture in Figure 2.

\smallskip

\begin{ta}\label{Exercise-3} Prove that $d\colon\mathbb{R}^2\times\mathbb{R}^2\rightarrow\mathbb{R}$, $d(x,y)=|x_1-y_1|+|x_2-y_2|$ is a metric.
\end{ta}

\smallskip

We leave the solution to the reader as we explained the major techniques in Theorem \ref{THM}. The par\-ti\-ci\-pants can re-use the proof given above. In order to show that the concept of a metric space is not just a purely theoretical object we gave the following application.

\begin{app}\label{APP} Metric spaces are \emph{the} setting in which convergence of sequences can be defined: Let $(x_n)_{n\in\mathbb{N}}$ be a sequence and $x$ a point in $M$. We say that $(x_n)_{n\in\mathbb{N}}$ converges to $x$, and write $\lim_{n\rightarrow\infty}x_n=x$, if for every number $\epsilon>0$ there exists an integer $n_0$ such that for all integers $n\geqslant n_0$ the inequality $d(x_n,x)<\epsilon$ holds.
\end{app}

The participants had encountered about limits of functions at school and were used to the notation $\lim_{x\rightarrow\infty}f(x)$. Our definition above is the special case where the domain of $f$ is the set of positive integers, i.e., $f\colon \mathbb{N}\rightarrow\mathbb{R}$, $f(n)=x_n$. This is, however, usually not discussed in German schools. In addition, limits of functions are not explained in a rigorous way. Nevertheless, we observed that our students had some intuitive idea about the concept of a limit.  We used Application \ref{APP} in order to point out that the notion of a metric space now allows for a formal definition. This is a clear advantage compared with an only intuitive idea. Moreover, it shows that convergence is not an intrinsic feature of the real numbers, but depends on the \emph{selection of a metric}.

\begin{ex}We consider the following two examples of different metrics on the real line.\vspace{3pt}
\begin{compactitem}
\item[(1)] Let $M=\mathbb{R}$, $d$ be the absolute value metric, $x_n=\frac{1}{n}$ and $x=0$. Let $\epsilon>0$ be arbitrary. We select $n_0>\frac{1}{\epsilon}$. Let $n\geqslant n_0$ be given. Then we have $d(x_n,x)=|\frac{1}{n}-0|=\frac{1}{n}\leqslant\frac{1}{n_0}<\epsilon$. Therefore, we proved $\lim_{n\rightarrow\infty}\frac{1}{n}=0$.\vspace{5pt}
\item[(2)] Let $M=\mathbb{R}$ and $q$ be the so-called trivial metric, i.e.,
$$
q\colon\mathbb{R}\times\mathbb{R}\rightarrow\mathbb{R},\;q(x,y)=\begin{cases} \;0 & \text{ if } x=y,\\ \;1 & \text{ otherwise.}\end{cases}
$$
For $(x_n)_{n\in\mathbb{N}}$ and $x$ as in (1) we then have $q(x_n,x)=q(\frac{1}{n},0)=1$ for any integer $n$. Thus, $\lim_{n\rightarrow\infty}\frac{1}{n}=0$ is not true with respect to the metric $q$. Convergence thus depends on the metric.
\end{compactitem}
\end{ex}

\smallskip

Finally, we turned back to the last example from Exercise \ref{Exercise-1} in order to complement the previous material by an example that really goes beyond the scope of school mathematics.

\medskip

\begin{ex} Let $M=C[0,1]$ be the set of all continuous functions from $[0,1]$ to $\mathbb{R}$. The metric $d(f,g)=\max_{0\leqslant x\leqslant1}|f(x)-g(x)|$ allows to study if a sequence of functions is convergent. The sequence $f_n(x)=\sin(\frac{x}{n})$ for instance converges to the zero function but for the sequence $g_n(x)=x^n$ there is no continuous function $g$ such that $\lim_{n\rightarrow\infty}g_n=g$ holds with respect to the metric $d$.
\end{ex}

\vspace{-7pt}

\begin{figure}[h]
\begin{tikzpicture}[xscale=5,yscale=2]
    \draw [<->,thick] (0,1.1) node (yaxis) [above] {}
        |- (1.1,0) node (xaxis) [right] {};

 \draw[scale=1,domain=0:1,smooth,variable=\x,black,line width=0.8pt]   plot (\x,{sin(\x*(1/2) r)});   

\draw[scale=1,domain=0:1,smooth,variable=\x,black,line width=0.8pt]   plot (\x,{sin(\x r)});

 \draw[scale=1,domain=0:1,smooth,variable=\x,black,line width=0.8pt]   plot (\x,{sin(\x*(1/4) r)}); 
 
  \draw[scale=1,domain=0:1,smooth,variable=\x,black,line width=0.8pt]   plot (\x,{sin(\x*(1/10) r)});

\coordinate (C) at (1,1);
\coordinate (S) at (0,1);

\node[text width=2cm] at (0.67,0.9)  {\footnotesize$f_n(x)=\sin(\frac{x}{n})$};

\draw (xaxis -| C) node[below] {$1$};
\draw (xaxis -| S) node[below] {$0$};

\end{tikzpicture}
\begin{tikzpicture}[xscale=5,yscale=2]
    \draw [<->,thick] (0,1.1) node (yaxis) [above] {}
        |- (1.1,0) node (xaxis) [right] {};
\draw[scale=1,domain=0:1,smooth,variable=\x,black,line width=0.8pt] plot ({\x},{\x});

\draw[scale=1,domain=0:1,smooth,variable=\x,black,line width=0.8pt] plot ({\x},{\x*\x});
\draw[scale=1,domain=0:1,smooth,variable=\x,black,line width=0.8pt] plot ({\x},{\x*\x*\x*\x*\x});
\draw[scale=1,domain=0:1,smooth,variable=\x,black,line width=0.8pt] plot ({\x},{\x*\x*\x*\x*\x*\x*\x*\x*\x*\x*\x*\x});

\draw[scale=1,domain=0:1,smooth,variable=\x,black,line width=0.8pt] plot ({\x},{\x*\x*\x*\x*\x*\x*\x*\x*\x*\x*\x*\x*\x*\x*\x*\x*\x*\x*\x*\x*\x*\x*\x*\x});
\coordinate (C) at (1,1);
\coordinate (S) at (0,1);
\node[text width=2cm] at (0.67,0.9)  {\footnotesize$g_n(x)=x^n$};
\draw (xaxis -| C) node[below] {$1$};
\draw (xaxis -| S) node[below] {$0$};

\fill [black, opacity=0]  (-0.15,-0.25) rectangle (1,1);

\end{tikzpicture}
\begin{center}
{\small{\sc Figure 5.} A convergent sequence of functions (left) and a non-convergent sequence (right).}
\end{center}
\end{figure}

For the last example we did not give a proof, but indicated pictures of the $f_n$ and $g_n$, see Figure 5, so as to create a visual understanding for the statement.

\section{Reflection}\label{SEC-3}

In Section 1, the goal of our workshop was described as painting a realistic picture of mathematical studies at university. Our workshop met this goal in two ways: The first way is concerned with a theoretically based analysis of the content of the workshop. It inherited lecture parts which not only touched university level but also provided a realistic insight into basic skills and methods used in mathematics: Definition \ref{DFN}, Theorem \ref{THM} and the notion of convergence given in Application \ref{APP} appear verbatim in many first year courses on analysis. The participants solved exercises, in particular Exercise \ref{Exercise-3} is compatible with homework for first year university students in the German system. Finally, the participants presented the solutions to their classmates as it is usually required by university students during exercise sessions. Due to the concept of working in small groups with one teaching assistant assigned to each group it was possible to provide the participants not only with an experience what mathematical studies are, but also with an experience what mathematics is about.

\medskip

The second way in which the workshop met our goal stated in Section 1 is related to students' written feedback. After the workshop, they were asked to answer the following prompts:

\vspace{5pt}

\begin{compactitem}
\item[(1)] What did you like? What did you understand well?\vspace{3pt}
\item[(2)] What didn't you like? What didn't you understand well?\vspace{3pt}
\item[(3)] What additional comments do you have?\vspace{3pt}
\end{compactitem}

\smallskip

Two students mentioned that they liked that the workshop was close to university studies:\vspace{-5pt}
\begin{figure}[h]
\includegraphics[width=300pt]{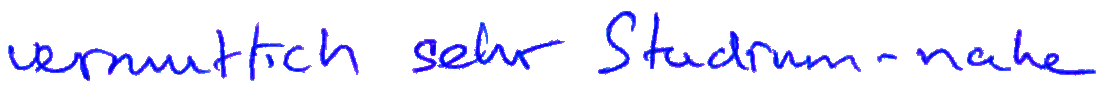}
\begin{center}
\textquotedblleft{}probably very similar to university studies\textquotedblright{}\vspace{5pt}
\end{center}
\includegraphics[width=160pt]{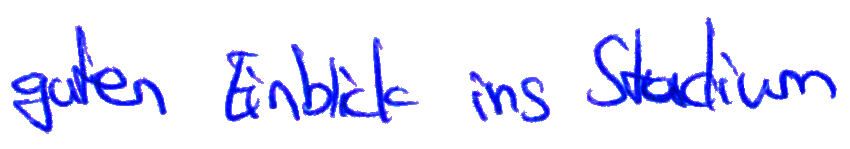}
\begin{center}\vspace{-3pt}
\textquotedblleft{}good insight into university studies\textquotedblright{}
\end{center}
\end{figure}

Five students explicitly mentioned that they liked the active participation via exercises, e.g.:
\begin{figure}[h]
\includegraphics[width=380pt]{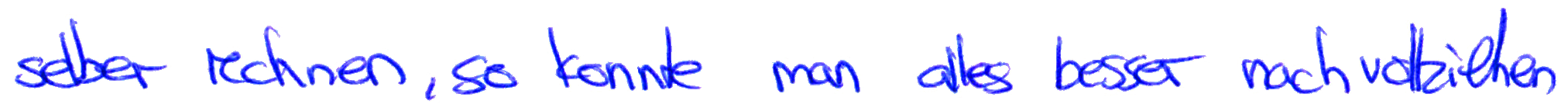}
\begin{center}
\textquotedblleft{}by computing ourselves we could understand everything much better\textquotedblright{}\vspace{5pt}
\end{center}
\includegraphics[width=350pt]{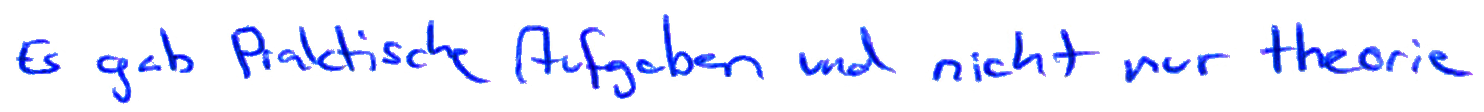}
\begin{center}\vspace{-3pt}
\textquotedblleft{}concrete exercises and not only theory\textquotedblright{}
\end{center}
\end{figure}

\pagebreak

Eight students appreciated working in small groups and explicitly mentioned it, e.g.:

\begin{figure}[h]
\includegraphics[width=269pt]{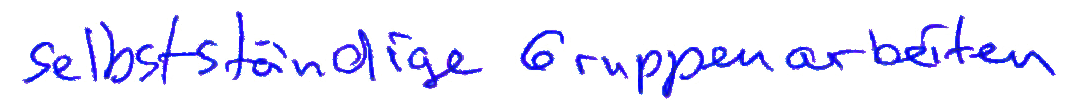}
\begin{center}
\textquotedblleft{}working independently in groups\textquotedblright{}
\end{center}
\end{figure}

With regards to criticism, two participants indicated that the topic was difficult, eight mentioned that the lecture parts were too fast and that there was not enough time for the exercises though one student, at the same time, admitted in a rather rational way that this is \textquotedblleft{}probably due to university studies\textquotedblright{}. However, the latter impressions are a characteristic aspect of studies in mathematics and it is therefore a good idea honestly to inform prospective students  about this issue before they finally decide which major to take.

\section{Conclusion}\label{SEC-4}

As we explained in Section \ref{SEC-3} the workshop was very successful. It met our goals and it received an encouraging feedback from the participants. In particular, assigning exercises to the participants seemed to be a key factor for the success of the workshop. This corresponds, e.g., to the results of Halverscheid and Sibbertsen \cite[p.~8]{HS} who found in a similar format that \textquotedblleft{}exercises are most important for the students\textquotedblright{}. We can thus recommend the workshop format as well as our concrete topic to anybody interested in organizing a similar open day in mathematics.

\medskip

\section*{Acknowledgements}\vspace{-2pt}

{

\small

The authors would like to thank M.\,Frantzen, F.\,Hasenkamp, K.\,Kerkmann, E.\,Mol\'{n}ar and N.\,Vaz for their help during the workshop. In addition they would like to thank the participants for their active participation, which made the workshop very enjoyable also for the teachers. Finally, the authors would like to thank the reviewers for their very valuable comments.

}

\medskip

\normalsize

\bibliographystyle{amsplain}

\providecommand{\href}[2]{#2}

\end{document}